\newcommand{\F}{\mathcal{F}}
\renewcommand{\a}{\alpha}
\newcommand{\D}{\nabla}
\def\d{\partial}
\def\e{\varepsilon}
\def\theta{\vartheta}
\def\weak{\rightharpoonup}
\newcommand{\R}{\mathbb{R}}
\def\FF{\mathcal{F}}
\def\D{\nabla}
\def\a{\alpha}
\def\XXint#1#2#3{{\setbox0=\hbox{$#1{#2#3}{\int}$}
     \vcenter{\hbox{$#2#3$}}\kern-.5\wd0}}
\newcommand{\We}{W^{2,2}_{{\rm det}, \e}(S)}
\newcounter{bei}
\newcommand{\zwo}[2]{\begin{pmatrix} {#1}\\{#2} \end{pmatrix}}
\renewcommand{\t}{\widetilde}
\renewcommand{\o}{\overline}
\newcommand{\De}{\nabla_{\!\e}}
\numberwithin{equation}{section}
\theoremstyle{plain}
\newtheorem{theorem}{Theorem}[section]
\newtheorem{lemma}[theorem]{Lemma}
\theoremstyle{definition}
\newtheorem{remark}[theorem]{Remark}
\DeclareMathOperator{\sym}{sym}
\author[L. Freddi]{Lorenzo Freddi}
\author[P. Hornung]{Peter Hornung}
\author[M.G. Mora]{Maria Giovanna Mora}
\author[R. Paroni]{Roberto Paroni}
\begin{document}
\title{One-dimensional von K\'arm\'an models for elastic ribbons}

\address[L. Freddi]{Dipartimento di Matematica e Informatica,
via delle Scienze 206, 33100 Udine, Italy
}\email{lorenzo.freddi@uniud.it}
\address[P. Hornung]{Fachrichtung Mathematik,
TU Dresden,
01062 Dresden,
Germany
}\email{peter.hornung@tu-dresden.de}
\address[M.G. Mora]{Dipartimento di Matematica, Universit\`a di Pavia, via Ferrata 1, 27100 Pavia, Italy}
\email{mariagiovanna.mora@unipv.it}
\address[R. Paroni]{DADU, Universit\`{a} degli Studi di
Sassari, Palazzo del Pou Salit,
07041 Alghero (SS), Italy} \email{paroni@uniss.it}

\date{}

\begin{abstract}
By means of a variational approach we rigorously deduce three one-dimensional models for elastic ribbons from the theory of von  K\'arm\'an plates, passing to the limit as the width of the plate goes to zero.
The one-dimensional model found starting from the ``linearized'' von  K\'arm\'an energy corresponds to that of a linearly elastic beam  that can twist but can deform in just one plane; while the model found from the  von  K\'arm\'an energy is a non-linear model that comprises stretching, bendings, and twisting.  The ``constrained'' von  K\'arm\'an energy, instead, leads to a new Sadowsky type of model.
\\

\noindent
{\sc Keywords}: Elastic ribbons, von  K\'arm\'an plates, Sadowsky functional, Gamma-convergence
\medskip

\noindent
{\sc Mathematics Subject Classification} : 49J45, 49S05, 74B20, 74K20, 74K99

\end{abstract}

\maketitle

\section{Introduction}\label{Sec0}

Geometrically a ribbon is a body with three length scales: it is a parallelepiped whose length $\ell$ is much larger than the width $\e$, which, in turn, is  much larger than the thickness $h$. That is, $\ell\gg \e\gg h$.
Since two characteristic dimensions are much smaller than the length, ribbons can be efficiently modelled as a one-dimensional continuum, see \cite{FF16}. In the literature, two types of one-dimensional models are found: rod models and ``Sadowsky type'' models. We shall mainly discuss
the latter, since we work within that framework; for rod type
models we refer to \cite{DA2015} and the references therein.

So far, ``Sadowsky type'' models have been deduced starting from a plate model, that is, from a two-dimensional model obtained from a three-dimensional problem by letting the thickness
$h$ go to zero. 
Starting from a Kirchhoff plate model, a one-dimensional model for an isotropic elastic ribbon was proposed by Sadowsky in 1930, \cite{HF2015b,Sadowsky1}. The model was formally justified in 1962 by Wunderlich, \cite{To2015, Wunder}, by considering the Kirchhoff model for a plate of length $\ell$ and width $\e$, and by letting $\e$ go to zero. The justification given was only formal,
since it was based on an ansatz on the deformation. Wunderlich's technique is quite ingenious, but it leads to a singular energy density; we refer to \cite{KF2015} for a rigorous analysis
of the so-called Wunderlich energy. A corrected Sadowsky type of energy was derived in \cite{FrHoMoPa} and generalized in \cite{AgDeKo, FrHoMoPa2}.

A third approach, which partly justifies the two approaches mentioned above, is to let  the width $\e$ and  the thickness $h$ go to zero simultaneously. By appropriately tuning the rates at which $\e$ and $h$ converge to zero, one obtains a hierarchy of one-dimensional models:
in \cite{FrMoPa, FrMoPa2} several rod models have been deduced, and in a forthcoming paper we will show that also ``Sadowsky type'' models can be obtained.

Before describing the contents of the present paper, we point out that the literature
on ribbons is really blooming in several interesting directions, see, for instance, \cite{AEK11, BH2015, Ch2014, CBK93, Ef2015, KTM12, SYU11, SH2015}.

Our starting point are the von K{\'a}rm{\'a}n plate models, whereas the papers quoted above have the Kirchhoff plate model as a starting point.
The  von K{\'a}rm{\'a}n model for plates has been successfully used in \cite{ChDeDa} to describe the plethora of morphological instabilities observed in a stretched and twisted ribbon.

The von K{\'a}rm{\'a}n plate equations, formulated more than a hundred years ago \cite{vK1910},
have been recently justified by Friesecke, James, and M\"uller~\cite{FrJaMu}.
These authors consider a three-dimensional non-linear hyper-elastic material in a reference configuration $\Omega_h=S_\e\times(-\frac{h}{2},\frac{h}{2})$ with a stored energy density $W:\R^{3\times 3}\to [0,+\infty)$ satisfying standard regularity and growth conditions.
In~\cite{FrJaMu} the set $S_\e$ is quite general, but in this introduction, in order
to be consistent with the previous discussion, we take 
$S_\e=(-\ell/2,\ell/2)\times (-\e/2,\e/2)$.
Then, the energy associated with a deformation $y:\Omega_h\to\R^3$ is given by
$$
{\mathcal E}^h(y)=\int_{\Omega_h}W(\nabla y)\,dx.
$$
By scaling the elastic energy per unit volume ${\mathcal E}^h/h\sim h^\beta$, with $\beta$ a positive real parameter, in~\cite{FrJaMu} a hierarchy of plate models has been
derived (by letting $h$ go to zero) by means of $\Gamma$-convergence theory.
The larger  $\beta$ is, the smaller the energy becomes. Therefore, heuristically, for large $\beta$ the limit of the rescaled energy should produce the linear plate equation. This is indeed corroborated in~\cite{FrJaMu}. 
Still in the same paper it is shown that for $\beta=2\alpha-2$ and for the regimes  $\alpha>3$, $\a=3$, and $2<\a<3$ three different $\Gamma$-limits are obtained that correspond to  von K{\'a}rm{\'a}n type of energies.

Precisely, denoting by $u:S_\e\to\R^2$ and $v:S_\e\to\R$ the in-plane and the out-of-plane
  displacement fields, respectively, the three asymptotic energies are as follows
  (see  \cite[Theorem~2]{FrJaMu}):
\begin{itemize}
  \item[({\em LvK})$_\e$] for $\alpha>3$ we have the ``linearized'' von K{\'a}rm{\'a}n theory, where $u=0$ and $v$ minimizes the functional
  $$
  I^{LvK}_\e(v):=\frac{1}{24}\int_{S_\e} Q_2(\nabla^2v)\,dx,
  $$
with $Q_2:\R^{2\times 2}_{\sym} \to[0,+\infty)$ the positive definite quadratic form of linearized elasticity, see Remark~\ref{remark} for a precise definition;
  \item[({\em vK})$_\e$] for $\alpha=3$ we have the von K{\'a}rm{\'a}n theory, where the in-plane and the out-of-plane
  displacements $u$ and $v$ minimize the functional
  $$
  I^{vK}_{\e}(u,v):=\frac12\int_{S_\e} Q_2\Big(\frac12[\nabla u+(\nabla u)^T+\nabla v\otimes\nabla v]\Big)\,dx+\frac{1}{24}\int_{S_\e} Q_2(\nabla^2v)\,dx;
  $$
  \item[({\em CvK})$_\e$] for  $2<\alpha<3$ we have the ``constrained'' von K{\'a}rm{\'a}n theory, in which the functional
 $$
  I^{CvK}_\e(v):=\frac{1}{24}\int_{S_\e} Q_2(\nabla^2v)\,dx
  $$
has to be minimized under the non-linear constraint
  \begin{equation}\label{nlc}
  \nabla u+(\nabla u)^T+\nabla v\otimes\nabla v=0,
  \end{equation}
  or, equivalently, the functional $ I^{CvK}_\e$ has to be minimized under the constraint
  $$
  \det(\nabla^2v)=0
  $$
  (which  is, in turn,  necessary and sufficient for the existence of a map $u$ satisfying~\eqref{nlc}).
\end{itemize}
The existence of minimizers and the characterization of the Euler equations for constrained von K{\'a}rm{\'a}n plates have been studied in~\cite{Ho2014}.

By letting $h$ go to zero, the three-dimensional domain $\Omega_h=S_\e\times(-\frac{h}{2},\frac{h}{2})$ is ``squeezed'' to become $S_\e$.
In this paper, we consider the von K{\'a}rm{\'a}n energies and we let $\e$ go to zero, still by means of $\Gamma$-convergence, to find  one-dimensional models for elastic ribbons in the von K{\'a}rm{\'a}n regimes. In this way, the two-dimensional
domain $S_\e=(-\ell/2,\ell/2)\times (-\e/2,\e/2)$ is  ``squeezed'' to the segment 
$I=(-\ell/2,\ell/2)$,
which we parametrize with the coordinate $x_1$.
In the limit, the in-plane displacement $u : S_\e\to\R^2$ generates two displacements: an axial displacement $\xi_1 : I\to\R$, and an orthogonal ``in-plane'' displacement $\xi_2:I\to\R$.
The out-of-plane displacement $v:S_\e\to\R$, in turn, generates an ``out-of-plane'' displacement $w:I\to\R$ and the derivative of $v$ in the direction orthogonal to the axis
leads to a rotation $\vartheta:I\to \R$. 
The limit energies that we find in the three regimes are the following:
\begin{itemize}
  \item[({\em LvK})] the limit of the ``linearized'' von K{\'a}rm{\'a}n energy is
$$
J^{LvK}(w,\vartheta):=\frac{1}{24}\int_I Q_1(w'', \vartheta')\,dx_1;
$$
 \item[({\em vK})] the limit of the von K{\'a}rm{\'a}n energy is
$$
J^{vK}(\xi,w,\vartheta):=\frac12\int_I Q_0\Big(\xi_1'+\frac{|w'|^2}{2}\Big)\,dx_1+\frac{1}{24}\int_I 
\big(Q_0(\xi_2'')+Q_1(w'', \vartheta') \big)\,dx_1;
$$
  \item[({\em CvK})] the limit of the ``constrained'' von K{\'a}rm{\'a}n energy is
$$
J^{CvK}(w,\vartheta):=\frac{1}{24}\int_I \o Q(w'', \vartheta')\,dx_1.
$$
\end{itemize}
Here $Q_1, Q_0$, and $\overline Q$ are energy densities whose precise definition
can be found in Section~\ref{ns}; 
see Remark~\ref{remark} for the specialization of these energies in the isotropic case. 

We note that, since $Q_1$ is quadratic, the functional ({\em LvK}) corresponds to the energy of a linearly elastic ``three-dimensional beam'' in which the section $S_\e$ is unstretchable: the energy 
is simply due to the ``out-of-plane'' bending of the axis and to the torsion of the cross-section orthogonal to the axis. The limit functional ({\em vK}) is non-linear and penalizes stretching and both bendings of the axis, as well as the torsion of the cross-section.
The functional ({\em CvK}) is sometimes called the energy of a beam with large deflections, see \cite{Wa}. 
Despite the appearance, the energy functional
({\em CvK}) is very different from that of ({\em LvK}).
Indeed, in contrast to $Q_1$, the energy density $\o Q$ is not quadratic. It incorporates
into its definition the non-linear constraint \eqref{nlc} that appears into the two-dimensional model ({\em CvK})$_\e$.
The energy density $\o Q$ agrees with
the corrected Sadowsky energy density found in \cite{FrHoMoPa} in the isotropic case, and with that found in \cite{FrHoMoPa2} for the general anisotropic case.
To the best of our knowledge, the model ({\em CvK}) is new.

We conclude this introduction by pointing out that the statements of the results
and the precise definitions are given in  Section~\ref{ns}, while 
Section~\ref{proofs} is exclusively devoted to the the proofs of these results.

\section{Narrow strips}\label{ns}

Let $\ell> 0$, let $I$ denote the interval $(-\ell/2, \ell/2)$, and let
 $S_{\e} = I\times (-\e/2, \e/2)$ with
$\e>0$. For  $u\in W^{1,2}(S_\e;\R^2)$ and $v\in W^{2,2}(S_\e)$
we consider the scaled von K\'arm\'an extensional and bending  energies
$$
{\mathscr J}^{ext}_{\e}(u,v) =\frac{1}{\e}\frac12\int_{S_\e} Q_2\Big(Eu+\frac12\nabla v\otimes\nabla v\Big)\,dx,\qquad
{\mathscr J}^{ben}_{\e}(v) = \frac{1}{\e}\frac{1}{24}\int_{S_{\e}} Q_2(\nabla^2 v)\, dx,
$$
where $Eu =\frac12({\nabla u +\nabla u^T})$ is the symmetric part of the gradient of the in-plane displacement $u$,  while $\nabla^2 v$
denotes the Hessian matrix of the out-of-plane displacement $v$. The energy density 
$Q_2:\R^{2\times 2}_{\sym} \to[0,+\infty)$ is assumed to be a positive definite quadratic form.

To simplify our analysis we rewrite the energies over the domain $S := S_1=I\times (-1/2, 1/2)$. More precisely, we 
introduce the scaled versions $y:S\to\R^2$ and $w: S\to\R$ of $u$ and $v$, respectively, by setting
$$
y_1(x_1, x_2) := u_1(x_1, \e x_2),\quad y_2(x_1, x_2) := \e u_2(x_1, \e x_2),\quad w(x_1, x_2) := v(x_1, \e x_2),
$$
and define the scaled differential operators
$$
 E^\e y:=\begin{pmatrix}
\partial_{1}y_1 & \frac{1}{2\e}(\partial_{1}y_2+\partial_{2}y_1)
\\[1ex]
\frac{1}{2\e}(\partial_{2}y_1+\partial_{1}y_2)& \frac{1}{\e^2}\partial_{2}y_2
\end{pmatrix},
$$
\vspace{1ex}
$$
 \De w:= \Big(\d_1w, \frac{1}{\e}\d_2w\Big),\quad  \nabla^2_\e w:=\begin{pmatrix}
\partial^2_{11}w & \frac{1}{\e}\partial^2_{12}w
\\[1ex]
\frac{1}{\e}\partial^2_{21}w & \frac{1}{\e^2}\partial^2_{22}w
\end{pmatrix},$$ 
so that
$$
E^\e y(x) = Eu (x_1,\e x_2),\quad \nabla_\e w(x)=\nabla v(x_1,\e x_2),\quad \nabla^2_\e w(x) = \nabla^2 v(x_1, \e x_2).
$$
By performing the change of variables in the energy integrals
we have that ${\mathscr J}^{ext}_{\e}(u,v)=J^{ext}_{\e}(y,w)$ and ${\mathscr J}^{ben}_{\e}(v)=J^{ben}_{\e}(w)$, where
\begin{equation}\label{jeps}
J^{ext}_{\e}(y,w) :=\frac12\int_{S} Q_2\Big(E^\e y+\frac12\nabla_\e w\otimes\nabla_\e w\Big)\,dx,\qquad J^{ben}_{\e}(w) := \frac{1}{24}\int_S Q_2(\nabla^2_\e w)\, dx.
\end{equation}

Since we do not impose boundary conditions, we require the displacements to have zero average and,
for the out-of plane component, also zero average gradient.
That is, we shall work in the following spaces: for every open  set $\Omega\subset \R^\a$ with $\a=1,2$, we consider
\begin{align*}
W^{1,2}_{\langle 0\rangle}(\Omega)&:=\Big\{g\in W^{1,2}(\Omega): \int_\Omega g(x)\, dx=0\Big\},\\
W^{2,2}_{\langle 0\rangle}(\Omega)&:=\Big\{g\in W^{2,2}(\Omega): \int_\Omega g(x)\, dx=0 \mbox{ and } \int_\Omega \nabla g(x)\, dx=0\Big\},
\end{align*}
and similarly we define $W^{1,2}_{\langle 0\rangle}(\Omega;\R^2)$.

Our first result is  about compactness of sequences with bounded energy;
the limit of the in-plane displacements will belong to   
the space of two-dimensional Bernoulli-Navier functions 
defined by
\begin{align*}
BN_{\langle 0\rangle}(S;\R^2):&=\{g\in W^{1,2}_{\langle 0\rangle}(S;\R^2)\ :\ (Eg)_{12}=(Eg)_{22}=0\}  \\
&=  \{g\in W^{1,2}_{\langle 0\rangle}(S;\R^2)\ :\ \exists\,\xi_1\in W^{1,2}_{\langle 0\rangle}(I)\mbox{ and }\xi_2\in W^{1,2}_{\langle 0\rangle}(I)\cap W^{2,2}(I)
\mbox{ such that }\\
&\hspace{24ex}g_1(x)=\xi_1(x_1)-x_2\xi_2'(x_1),\,g_2(x)=\xi_2(x_1)\}, 
\end{align*}
where the second characterization can be obtained by arguing as in \cite[Section~4.1]{LeDret}. 

\begin{lemma}\label{VKcompactness}
Let $(w_{\e})\subset W^{2,2}_{\langle 0\rangle}(S)$ be a sequence such that
\begin{equation}\label{VKcp-1}
\sup_{\e} J^{ben}_{\e}(w_{\e}) < \infty.
\end{equation}
Then, up to a subsequence, there exist a {\rm vertical displacement} $w\in W^{2,2}_{\langle 0\rangle}(I)$ and
a {\rm twist function} $\vartheta\in W^{1,2}_{\langle 0\rangle}(I)$ such that
\begin{equation}\label{VKconv}
w_{\e}\weak w \mbox{ in }W^{2,2}(S),\qquad
\D_{\e} w_{\e} \weak (w', \vartheta)\mbox{ in }W^{1,2}(S; \R^2),
\end{equation}
and
\begin{equation}
\label{VKconv_second}
\nabla^2_\e w_\e \weak
\begin{pmatrix}
w'' & \vartheta'
\\
\vartheta' & \gamma
\end{pmatrix}
\mbox{ in }L^{2}(S; \R^{2\times 2}_{\sym})
\end{equation}
for a suitable $\gamma\in L^2(S)$.

Moreover, if $(y_{\e})\subset  W^{1,2}_{\langle 0\rangle}(S;\R^2)$  is a further sequence such that
\begin{equation}\label{VKcp-2}
\sup_{\e} J^{ext}_{\e}(y_{\e},w_\e) < \infty,
\end{equation}
then, up to a subsequence, there exists $y\in BN_{\langle 0\rangle}(S;\R^2)$
 such that
\begin{equation*}\label{VKcony}
y_{\e}\weak y \mbox{ in }W^{1,2}(S;\R^2).
\end{equation*}
Also,  
$$
E^\e y_\e \weak
E
\mbox{ in }L^{2}(S; \R^{2\times 2}_{\sym})
$$
for a suitable $E\in L^{2}(S; \R^{2\times 2}_{\sym})$ such that $E_{11}=\partial_1y_1$.
\end{lemma}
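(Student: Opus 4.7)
My plan is to exploit positive definiteness of $Q_2$ applied to $\nabla^2_\e w_\e$ to peel off componentwise $L^2$-bounds from \eqref{VKcp-1}: $\|\partial^2_{11}w_\e\|_{L^2}\le C$, $\|\partial^2_{12}w_\e\|_{L^2}\le C\e$, and $\|\partial^2_{22}w_\e\|_{L^2}\le C\e^2$. Two applications of Poincar\'e (using the zero-mean conditions on $w_\e$ and $\nabla w_\e$) then yield a uniform $W^{2,2}(S)$-bound and a weakly convergent subsequence $w_\e\weak w$ in $W^{2,2}(S)$. The limit satisfies $\partial^2_{22}w=\partial^2_{12}w=0$, so $w(x_1,x_2)=\tilde w(x_1)+c\,x_2$; the zero mean of $\nabla w$ forces $c=0$, giving $w\in W^{2,2}_{\langle 0\rangle}(I)$.

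\textbf{Twist and Hessian limits.} For $\vartheta$ I would study the scaled derivative $\frac{1}{\e}\partial_2 w_\e$: its $\partial_1$-derivative equals $\frac{1}{\e}\partial^2_{12}w_\e$ (bounded in $L^2$), its $\partial_2$-derivative equals $\e\cdot\frac{1}{\e^2}\partial^2_{22}w_\e=O(\e)$ in $L^2$, and $\int_S \frac{1}{\e}\partial_2 w_\e\,dx=0$ by the zero mean of $\nabla w_\e$; Poincar\'e then yields a $W^{1,2}(S)$-bound and, up to a further subsequence, a weak limit $\vartheta$ satisfying $\partial_2\vartheta=0$, so $\vartheta\in W^{1,2}_{\langle 0\rangle}(I)$. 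Taking weak limits entry by entry I obtain $\nabla_\e w_\e\weak(w',\vartheta)$ in $W^{1,2}$, and for the Hessian $\partial^2_{11}w_\e\weak w''$, $\frac{1}{\e}\partial^2_{12}w_\e=\partial_1\bigl(\frac{1}{\e}\partial_2 w_\e\bigr)\weak\vartheta'$, while the bounded sequence $\frac{1}{\e^2}\partial^2_{22}w_\e$ weakly converges to some $\gamma\in L^2(S)$, producing \eqref{VKconv_second}.

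\textbf{Extensional side.} Since $\nabla_\e w_\e$ is bounded in $W^{1,2}(S;\R^2)$ from the previous step, the 2D Sobolev embedding $W^{1,2}\hookrightarrow L^p$ ($p<\infty$) makes $\nabla_\e w_\e\otimes\nabla_\e w_\e$ bounded in every $L^p(S;\R^{2\times 2})$; combined with \eqref{VKcp-2} and coercivity of $Q_2$, this gives $E^\e y_\e$ bounded in $L^2$, i.e.\ $\|\partial_1 y_{\e,1}\|_{L^2}\le C$, $\|\partial_1 y_{\e,2}+\partial_2 y_{\e,1}\|_{L^2}\le C\e$, and $\|\partial_2 y_{\e,2}\|_{L^2}\le C\e^2$. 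The 2D Korn--Poincar\'e inequality on $S$ then bounds $y_\e$ in $W^{1,2}$ modulo the one-dimensional kernel of $E$ on zero-mean fields, spanned by the rigid rotation $(-x_2,x_1)$; I would gauge-fix this mode by subtracting off its $L^2$-projection, which belongs to $BN_{\langle 0\rangle}(S;\R^2)$ and leaves both $J^{ext}_\e$ and the zero-mean condition unchanged, and then extract $y_\e\weak y$ in $W^{1,2}$. Weak passage to the limit in $Ey_\e$ kills the $(1,2)$ and $(2,2)$ entries (they are $\e$, respectively $\e^2$, times the bounded entries of $E^\e y_\e$), yielding $y\in BN_{\langle 0\rangle}(S;\R^2)$ via the characterization preceding the lemma, while $E^\e y_\e\weak E$ in $L^2$ with $E_{11}=\partial_1 y_1$ since $(E^\e y_\e)_{11}=\partial_1 y_{\e,1}\weak\partial_1 y_1$.

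\textbf{Main obstacle.} The hard step is this last Korn argument: because $J^{ext}_\e$ is invariant under the in-plane rigid rotation $(-x_2,x_1)$ and the only prescribed normalization on $y_\e$ is zero mean, the rotation component is a priori not controlled by the energy and must be handled by the gauge-fixing above (or, equivalently, the statement must be read as compactness up to this one-dimensional mode, which however already lies in the target space $BN_{\langle 0\rangle}(S;\R^2)$).
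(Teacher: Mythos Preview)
Your argument follows the paper's almost verbatim: componentwise bounds on $\nabla^2_\e w_\e$ from positive definiteness of $Q_2$, Poincar\'e--Wirtinger to bound $w_\e$ in $W^{2,2}$ and $\frac{1}{\e}\partial_2 w_\e$ in $W^{1,2}$ (the paper deduces $x_2$-independence of $w$ and $\vartheta$ in the same way), and on the extensional side the embedding $W^{1,2}\hookrightarrow L^4$ to control $\nabla_\e w_\e\otimes\nabla_\e w_\e$, followed by Korn--Poincar\'e.

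On your ``main obstacle'': the paper does not raise it at all and simply asserts $\|y_\e\|_{W^{1,2}}\le C\|Ey_\e\|_{L^2}\le C\|E^\e y_\e\|_{L^2}$ by ``Korn--Poincar\'e inequality''. You are right that this estimate fails on $W^{1,2}_{\langle 0\rangle}(S;\R^2)$ as written, since the infinitesimal rotation $(-x_2,x_1)$ has zero mean over $S=I\times(-\tfrac12,\tfrac12)$ and is annihilated by both $E$ and $E^\e$; so the paper's argument is, strictly speaking, incomplete at this step. Your gauge-fixing (equivalently, reading the compactness statement modulo this one-dimensional mode) is the correct cure, and since this mode already lies in $BN_{\langle 0\rangle}(S;\R^2)$ and leaves $J^{ext}_\e$ and the zero-mean condition invariant, nothing in the subsequent $\Gamma$-convergence analysis is affected.
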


The rest of this section is devoted to state the $\Gamma$-convergence results starting from the simpler case of the linearized theory ({\em LvK})$_\e$,
and proceeding in the order of increasing difficulty to consider the standard and the constrained models ({\em vK})$_\e$ and ({\em CvK})$_\e$, respectively.

\subsection{The linearized von K{\'a}rm{\'a}n model}

In order to state our first convergence result we need to introduce some definitions.
Let $Q_1:\R\times\R \to [0,+\infty)$ be defined by
$$
Q_1(\kappa,\tau) : = \min_{\gamma\in \R} \Big\{ Q_2(M)\ :\
M=
\begin{pmatrix}
\kappa & \tau
\\
\tau & \gamma
\end{pmatrix}
\Big\}.
$$
Let $J^{LvK}:W^{2,2}_{\langle 0\rangle}(I)\times W^{1,2}_{\langle 0\rangle}(I) \to \R$ be defined by
$$
J^{LvK}(w,\vartheta):=\frac{1}{24}\int_I Q_1(w'', \vartheta')\,dx_1.
$$

\begin{theorem}\label{VKlinGamma}
As $\e\to 0$, the functionals $J_\e^{ben}$ $\Gamma$-converge to the functional
$J^{LvK}$ in the following sense:
\begin{enumerate}
\item[(i)] {\rm (liminf inequality)} for every sequence $(w_{\e})\subset W^{2,2}_{\langle 0\rangle}(S)$,
$w\in W^{2,2}_{\langle 0\rangle}(I)$, and $\vartheta\in W^{1,2}_{\langle 0\rangle}(I)$ such that $w_{\e}\weak w$ in $W^{2,2}(S)$, and $\D_{\e} w_{\e} \weak (w', \vartheta)$ in $W^{1,2}(S; \R^2)$,
we have that
$$
\liminf_{\e\to 0} J_\e^{ben}(w_\e)\ge J^{LvK}(w,\vartheta);
$$
\item[(ii)] {\rm (recovery sequence)} for every $w\in W^{2,2}_{\langle 0\rangle}(I)$ and $\vartheta\in W^{1,2}_{\langle 0\rangle}(I)$ there exists a sequence
$(w_{\e})\subset W^{2,2}_{\langle 0\rangle}(S)$ such that
$w_{\e}\weak w$ in $W^{2,2}(S)$, $\D_{\e} w_{\e} \weak (w', \vartheta)$ in $W^{1,2}(S; \R^2)$, and
$$
\limsup_{\e \to 0} J_\e^{ben}(w_\e)\le J^{LvK}(w,\vartheta).
$$
\end{enumerate}
\end{theorem}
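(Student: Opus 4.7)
My plan is to handle the two inequalities separately, leveraging Lemma \ref{VKcompactness} for the liminf and an explicit quadratic ansatz combined with a diagonal argument for the recovery sequence.

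For the liminf inequality, I would start from the assumed convergences $w_\e \weak w$ in $W^{2,2}(S)$ and $\D_\e w_\e \weak (w',\vartheta)$ in $W^{1,2}(S;\R^2)$. Up to a subsequence, Lemma \ref{VKcompactness} provides some $\gamma\in L^2(S)$ with
$$
\nabla^2_\e w_\e \weak \begin{pmatrix} w'' & \vartheta' \\ \vartheta' & \gamma \end{pmatrix}\quad \mbox{in }L^2(S;\R^{2\times 2}_{\sym}).
$$
Since $Q_2$ is a positive definite quadratic form, the functional $M\mapsto \int_S Q_2(M)\,dx$ is convex and continuous on $L^2$, hence sequentially weakly lower semicontinuous. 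Combined with the pointwise inequality $Q_2(M)\ge Q_1(M_{11},M_{12})$ built into the very definition of $Q_1$, and with the independence of the integrand on $x_2$, this yields
$$
\liminf_{\e\to 0} J_\e^{ben}(w_\e) \ge \frac{1}{24}\int_S Q_2\begin{pmatrix} w'' & \vartheta' \\ \vartheta' & \gamma\end{pmatrix}dx \ge \frac{1}{24}\int_I Q_1(w'', \vartheta')\,dx_1 = J^{LvK}(w,\vartheta).
$$
The usual Urysohn argument upgrades this from subsequences to the full sequence.

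For the recovery sequence, I would reverse this derivation into an ansatz. Since $Q_2$ is positive definite, the minimizer in the definition of $Q_1$ is a unique linear map $\gamma^*:\R^2\to\R$. Assuming first $w,\vartheta\in C^\infty(\bar I)$, I would set
$$
\tilde w_\e(x_1,x_2) := w(x_1) + \e x_2\,\vartheta(x_1) + \frac{\e^2 x_2^2}{2}\,\gamma^*\bigl(w''(x_1),\vartheta'(x_1)\bigr),
$$
and then let $w_\e := \tilde w_\e - a_\e - b_\e x_1 - c_\e x_2$, where the affine corrector with coefficients $a_\e,b_\e,c_\e = O(\e^2)$ is chosen to enforce $w_\e \in W^{2,2}_{\langle 0\rangle}(S)$ (this is possible because $\int_I w = \int_I w' = \int_I \vartheta = 0$). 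A direct expansion gives
$$
\nabla^2_\e w_\e = \begin{pmatrix} w'' & \vartheta' \\ \vartheta' & \gamma^*(w'',\vartheta') \end{pmatrix} + O(\e)
$$
uniformly on $S$, which implies the required convergences $w_\e \to w$ in $W^{2,2}(S)$ and $\D_\e w_\e \to (w',\vartheta)$ in $W^{1,2}(S;\R^2)$ (even strongly) and, by dominated convergence, $J_\e^{ben}(w_\e)\to J^{LvK}(w,\vartheta)$.

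The extension to general $w\in W^{2,2}_{\langle 0\rangle}(I)$ and $\vartheta\in W^{1,2}_{\langle 0\rangle}(I)$ is by diagonalization. I would approximate $(w,\vartheta)$ strongly by smooth $(w_n,\vartheta_n)$ in the same spaces (mollification followed by subtraction of their means and affine defects) and exploit the continuity of the quadratic functional $J^{LvK}$ to conclude $J^{LvK}(w_n,\vartheta_n)\to J^{LvK}(w,\vartheta)$. Attouch's diagonal argument, applied to the recovery sequences $w_{\e,n}$ built above, then produces a single sequence with the required convergence and limsup bound. The main technical subtlety is precisely that $\gamma^*(w'',\vartheta')$ lies only in $L^2(I)$ in general, so the pointwise ansatz $\tilde w_\e$ need not belong to $W^{2,2}(S)$; the density step on smooth data is what circumvents this obstruction.
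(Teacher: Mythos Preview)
Your liminf argument is essentially identical to the paper's: invoke Lemma~\ref{VKcompactness} to get weak $L^2$-convergence of $\nabla^2_\e w_\e$, apply weak lower semicontinuity of the convex integral, and use the defining inequality $Q_2(M)\ge Q_1(M_{11},M_{12})$.

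Your recovery sequence is correct but organized differently from the paper's. You work in two stages: first build the ansatz for smooth $(w,\vartheta)$, then pass to general data by density plus a diagonal argument. The paper instead keeps $w\in W^{2,2}_{\langle 0\rangle}(I)$ as is and mollifies only $\vartheta$ and $\gamma$ \emph{at an $\e$-dependent rate}, choosing $\theta_\e\to\vartheta$ in $W^{1,2}(I)$ with $\e\theta_\e''\to 0$ in $L^2$, and $\gamma_\e\to\gamma$ in $L^2(I)$ with $\e\gamma_\e'\to 0$ and $\e^2\gamma_\e''\to 0$; it then writes down the single explicit sequence
\[
w_\e(x)=w(x_1)+\e x_2\theta_\e(x_1)+\tfrac{\e^2}{2}\bigl(x_2^2\gamma_\e(x_1)-\langle x_2^2\gamma_\e\rangle-x_1\langle x_2^2\gamma_\e'\rangle\bigr),
\]
for which $\nabla^2_\e w_\e\to M_\gamma$ strongly in $L^2$. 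This avoids the diagonalization entirely and is slightly more direct. Your approach, by contrast, separates cleanly the ``ansatz at smooth data'' step from the ``approximation of the target'' step, which is a standard and robust $\Gamma$-limsup pattern. One minor remark: the regularity obstruction you identify at the end is not only that $\gamma^*(w'',\vartheta')\in L^2$, but already that $\e x_2\vartheta(x_1)\notin W^{2,2}(S)$ when $\vartheta$ is merely $W^{1,2}$; your density step handles both issues simultaneously, so the argument is unaffected.
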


\subsection{The von K{\'a}rm{\'a}n model}

The statement of our second convergence result needs some further definitions.
Let $Q_0:\R \to [0,+\infty)$ be defined by
$$
Q_0(\mu) : =\min_{z\in\R}Q_1(\mu,z)=\min_{(z_1,z_2)\in \R^2} \Big\{ Q_2(M)\ :\
M=
\begin{pmatrix}
\mu & z_1
\\
z_1 & z_2
\end{pmatrix}
\Big\}.
$$
Let $J^{vK}:BN_{\langle 0\rangle}(S;\R^2)\times W^{2,2}_{\langle 0\rangle}(I)\times W^{1,2}_{\langle 0\rangle}(I) \to \R$ be defined by
$$
J^{vK}(y,w,\vartheta):=\frac12\int_S Q_0\Big(\partial_1y_1+\frac{|w'|^2}{2}\Big)\,dx+\frac{1}{24}\int_I Q_1(w'', \vartheta')\,dx_1.
$$

\begin{theorem}\label{VKusGamma}
As $\e\to 0$, the functionals $J_\e^{vK}:=J^{ext}_\e+J_\e^{ben}$ $\Gamma$-converge to the functional $J^{vK}$ in the following sense:
\begin{enumerate}
\item[(i)] {\rm (liminf inequality)} for every pair of sequences $(y_\e)\subset W^{1,2}_{\langle 0\rangle}(S;\R^2)$, $(w_{\e})\subset W^{2,2}_{\langle 0\rangle}(S)$, $y\in BN_{\langle 0\rangle}(S;\R^2)$,
$w\in W^{2,2}_{\langle 0\rangle}(I)$, and $\vartheta\in W^{1,2}_{\langle 0\rangle}(I)$ such that $y_{\e}\weak y$ in $W^{1,2}(S;\R^2)$, $w_{\e}\weak w$ in $W^{2,2}(S)$, and $\D_{\e} w_{\e} \weak (w', \vartheta)$ in $W^{1,2}(S; \R^2)$,
we have that
$$
\liminf_{\e\to 0} J_\e^{vK}(y_\e,w_\e)\ge J^{vK}(y,w,\vartheta);
$$
\item[(ii)] {\rm (recovery sequence)} for every $y\in BN_{\langle 0\rangle}(S;\R^2)$, $w\in W^{2,2}_{\langle 0\rangle}(I)$ and $\vartheta\in W^{1,2}_{\langle 0\rangle}(I)$ there exists a pair of sequences  $(y_\e)\subset W^{1,2}_{\langle 0\rangle}(S;\R^2)$,
$(w_{\e})\subset W^{2,2}_{\langle 0\rangle}(S)$ such that $y_{\e}\weak y$ in $W^{1,2}(S;\R^2)$,
$w_{\e}\weak w$ in $W^{2,2}(S)$, $\D_{\e} w_{\e} \weak (w', \vartheta)$ in $W^{1,2}(S; \R^2)$, and
$$
\limsup_{\e \to 0} J_\e^{vK}(y_\e,w_\e)\le J^{vK}(y,w,\vartheta).
$$
\end{enumerate}
\end{theorem}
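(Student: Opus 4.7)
The plan is to mirror the structure of Theorem~\ref{VKlinGamma}: treat the bending term $J_\e^{ben}$ exactly as there, and provide a parallel analysis for the extensional term $J_\e^{ext}$. The only genuinely new ingredient is the quadratic coupling $\nabla_\e w_\e \otimes \nabla_\e w_\e$, whose handling relies on Rellich--Kondrachov compactness.

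\emph{Liminf.} I may assume $\liminf_\e J_\e^{vK}(y_\e,w_\e)<\infty$ and, along a subsequence realizing this liminf, that $J_\e^{vK}(y_\e,w_\e)$ is uniformly bounded. Lemma~\ref{VKcompactness} then yields, along a further subsequence,
\[
E^\e y_\e \weak E,\qquad \nabla_\e^2 w_\e \weak \begin{pmatrix} w'' & \vartheta' \\ \vartheta' & \gamma \end{pmatrix} \quad\text{in } L^2(S;\R^{2\times 2}_{\sym}),
\]
with $E_{11}=\partial_1 y_1$. Since $\nabla_\e w_\e$ is bounded in $W^{1,2}(S;\R^2)$ with weak limit $(w',\vartheta)$, Rellich--Kondrachov upgrades this to strong convergence in $L^q(S;\R^2)$ for every $q<\infty$, so that $\nabla_\e w_\e\otimes \nabla_\e w_\e \to (w',\vartheta)\otimes(w',\vartheta)$ strongly in $L^2$. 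Setting $F_\e := E^\e y_\e + \tfrac12\nabla_\e w_\e\otimes \nabla_\e w_\e$, this gives $F_\e \weak F := E + \tfrac12(w',\vartheta)\otimes(w',\vartheta)$ in $L^2$, with $F_{11} = \partial_1 y_1 + (w')^2/2$. The extensional liminf then follows from weak $L^2$ lower semicontinuity of $M\mapsto \int_S Q_2(M)\,dx$ combined with the pointwise bound $Q_2(M)\ge Q_0(M_{11})$ built into the definition of $Q_0$, while the bending liminf $\tfrac{1}{24}\int_I Q_1(w'',\vartheta')\,dx_1$ is recovered as in Theorem~\ref{VKlinGamma} via $Q_2(M)\ge Q_1(M_{11},M_{12})$.

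\emph{Recovery.} Since $w\mapsto (w')^2$ is continuous $W^{2,2}(I)\to L^\infty(I)$ by Sobolev embedding, $J^{vK}$ depends continuously on its arguments in the natural topologies, and a standard density/diagonal argument reduces matters to smooth $\xi_1,\xi_2,w,\vartheta$ with the required zero-mean conditions. Let $\gamma^*(x_1)$ realize the minimum in $Q_1(w''(x_1),\vartheta'(x_1))$ and $(z_1^*(x),z_2^*(x))$ realize the minimum in $Q_0\big(\partial_1 y_1(x)+(w'(x_1))^2/2\big)$; both are smooth (in fact linear in the data, since $Q_2$ is a positive definite quadratic form). Define
\[
w_\e(x):=w(x_1)+\e x_2\vartheta(x_1)+\tfrac{(\e x_2)^2}{2}\gamma^*(x_1),\qquad y_\e:=y+\bigl(\e b(x),\,\e^2 a(x)\bigr),
\]
where $b$ and $a$ are obtained by $x_2$-integration from the prescriptions $\partial_2 b=2z_1^*-w'\vartheta$ and $\partial_2 a=z_2^*-\vartheta^2/2$, adjusted by $x_1$-independent shifts so that $\int_S y_\e=0$. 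A direct computation yields
\[
\nabla_\e^2 w_\e \to \begin{pmatrix} w'' & \vartheta' \\ \vartheta' & \gamma^* \end{pmatrix},\qquad E^\e y_\e+\tfrac12\nabla_\e w_\e\otimes \nabla_\e w_\e \to \begin{pmatrix} \mu & z_1^* \\ z_1^* & z_2^* \end{pmatrix}
\]
strongly in $L^2(S;\R^{2\times 2}_{\sym})$, with $\mu=\partial_1 y_1+(w')^2/2$. By the defining property of the minimizers, $J_\e^{vK}(y_\e,w_\e)\to J^{vK}(y,w,\vartheta)$; the correctors vanish uniformly, so the required weak convergences of $y_\e$, $w_\e$ and $\nabla_\e w_\e$ are immediate (in fact strong).

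\emph{Main obstacle.} The delicate step throughout is the nonlinear coupling $\nabla_\e w_\e\otimes\nabla_\e w_\e$. For the liminf, Rellich--Kondrachov is the engine that promotes the merely weak $W^{1,2}$ convergence of $\nabla_\e w_\e$ to strong $L^2$ convergence of the quadratic term, so that it can be combined with the weak limit of $E^\e y_\e$. For the recovery sequence, the in-plane corrector $\eta_\e=(\e b,\e^2 a)$ must be $\e$-scaled so that it is negligible in $W^{1,2}(S;\R^2)$ while $E^\e\eta_\e$ retains prescribed nontrivial $L^2$ limits in its off-diagonal and $(2,2)$ components, cancelling the corresponding contributions of $\tfrac12(w',\vartheta)\otimes(w',\vartheta)$ and driving the full strain to the minimizing configuration for $Q_0$. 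Density and zero-mean bookkeeping are then routine adaptations of the arguments in Theorem~\ref{VKlinGamma}.
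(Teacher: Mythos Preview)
Your proof is correct and follows essentially the same route as the paper. For the liminf you make explicit the Rellich--Kondrachov step that the paper leaves implicit when passing to the limit in $\nabla_\e w_\e\otimes\nabla_\e w_\e$; for the recovery sequence you first reduce to smooth data by density and then build the same ansatz, whereas the paper keeps the original $w,\xi_1,\xi_2$ and instead mollifies only the auxiliary minimizers ($\vartheta\to\vartheta_\e$, $\gamma\to\gamma_\e$, $\zeta_\alpha\to\zeta_\alpha^\e$, $\eta_\alpha\to\eta_\alpha^\e$), and also splits the $(1,2)$-corrector between the two components of $y_\e$ rather than placing it all in $(y_\e)_1$---but these are cosmetic differences leading to the same limits.
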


\subsection{The constrained von K{\'a}rm{\'a}n model}

The constrained von K{\'a}rm{\'a}n energy of a displacement $v\in W^{2,2}_{\langle 0\rangle}(S_\e)$ such that $\det \nabla^2 v=0 \text{ a.e.\ in } S_\e$ is ${\mathscr J}^{ben}_\e(v)$. 
We observe that
the map $w$, defined over the rescaled domain, belongs to the space
$$
\We:= \big\{ w\in W^{2,2}_{\langle 0\rangle}(S): \  \det \nabla^2_\e w=0 \text{ a.e.\ in } S \big\}.
$$
We set $J^{CvK}_\e:\We\to\R$  the functional $J^{CvK}_\e(w)=J_\e^{ben}(w)$.

Let $\overline Q:\R\times \R\to [0,+\infty)$ be defined by
$$
\overline Q(\kappa,\tau) : = \min_{\gamma\in \R} \Big\{ Q_2(M)+\alpha^+ (\det M)^+ + \alpha^-(\det M)^-:
M=
\begin{pmatrix}
\kappa & \tau
\\
\tau & \gamma
\end{pmatrix}
\Big\},
$$
where
$$
\alpha^+:=\sup\{ \alpha>0: \ Q_2(M)+\alpha\det M\geq0 \text{ for every } M\in \R^{2\times 2}_{\sym}\}
$$
and
$$
\alpha^-:=\sup\{ \alpha>0: \ Q_2(M)-\alpha\det M\geq0 \text{ for every } M\in \R^{2\times 2}_{\sym}\}.
$$
Let $J^{CvK}:W^{2,2}_{\langle 0\rangle}(I)\times W^{1,2}_{\langle 0\rangle}(I) \to \R$ be defined by
$$
J^{CvK}(w,\vartheta):=\frac{1}{24}\int_I \o Q(w'', \vartheta')\,dx_1.
$$

\begin{theorem}\label{VKGamma}
As $\e\to 0$, the functionals $J_\e^{CvK}$  $\Gamma$-converge to the functional $J^{CvK}$ in the following sense:
\begin{enumerate}
\item[(i)] {\rm (liminf inequality)} for every sequence $(w_{\e})$ with $w_\e\in \We$,
$w\in W^{2,2}_{\langle 0\rangle}(I)$, and $\vartheta\in W^{1,2}_{\langle 0\rangle}(I)$ such that $w_{\e}\weak w$ in $W^{2,2}(S)$, and $\D_{\e} w_{\e} \weak (w', \vartheta)$ in $W^{1,2}(S; \R^2)$,
we have that
$$
\liminf_{\e\to 0} J_\e^{CvK}(w_\e)\ge J^{CvK}(w,\vartheta);
$$
\item[(ii)] {\rm (recovery sequence)} for every $w\in W^{2,2}_{\langle 0\rangle}(I)$ and $\vartheta\in W^{1,2}_{\langle 0\rangle}(I)$ there exists a sequence
$(w_{\e})$ with $w_\e\in \We$ such that
$w_{\e}\weak w$ in $W^{2,2}(S)$, $\D_{\e} w_{\e} \weak (w', \vartheta)$ in $W^{1,2}(S; \R^2)$, and
$$
\limsup_{\e \to 0} J_\e^{CvK}(w_\e)\le J^{CvK}(w,\vartheta).
$$
\end{enumerate}
\end{theorem}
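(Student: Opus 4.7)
My plan separates cleanly into the liminf (using a null-Lagrangian / convexification trick) and the recovery sequence (using a Sadowsky-type lamination construction).

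For part (i), let $(w_\e)$ be admissible with the stated convergences. By Lemma~\ref{VKcompactness}, along a subsequence $\nabla^2_\e w_\e\weak M_\infty$ weakly in $L^2(S;\R^{2\times 2}_{\sym})$, where
\[
M_\infty(x)=\begin{pmatrix} w''(x_1) & \vartheta'(x_1) \\ \vartheta'(x_1) & \gamma(x) \end{pmatrix}
\]
for some $\gamma\in L^2(S)$. Since $\det\nabla^2_\e w_\e=0$ a.e., for any measurable $\alpha:S\to[-\alpha^-,\alpha^+]$ one has
\[
\int_S Q_2(\nabla^2_\e w_\e)\,dx = \int_S\bigl(Q_2(\nabla^2_\e w_\e)+\alpha(x)\det\nabla^2_\e w_\e\bigr)\,dx.
\]
By the very definition of $\alpha^\pm$, the integrand on the right is a pointwise nonnegative, hence convex, quadratic function of $M$, so the functional on the right is weakly lower semicontinuous on $L^2(S;\R^{2\times 2}_{\sym})$. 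Selecting the pointwise optimal
\[
\alpha(x):=\alpha^+\,\mathbf{1}_{\{\det M_\infty\ge 0\}}(x)-\alpha^-\,\mathbf{1}_{\{\det M_\infty<0\}}(x)
\]
and taking the liminf give
\[
\liminf_{\e\to 0}\int_S Q_2(\nabla^2_\e w_\e)\,dx\ge \int_S\bigl[Q_2(M_\infty)+\alpha^+(\det M_\infty)^+ + \alpha^-(\det M_\infty)^-\bigr]dx.
\]
Since $\det M_\infty$ is affine in $\gamma$ and $t\mapsto t^\pm$ is convex, the whole integrand is convex in $\gamma$; Jensen's inequality in $x_2$ replaces $\gamma(x_1,x_2)$ by its mean $\overline\gamma(x_1)$, and the definition of $\overline Q$ as a minimum over $\gamma$ immediately yields $\liminf J^{CvK}_\e(w_\e)\ge J^{CvK}(w,\vartheta)$.

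For part (ii), a density argument (strong $W^{2,2}\times W^{1,2}$ approximation of $(w,\vartheta)$ by smooth functions, combined with the at-most-quadratic growth of $\overline Q$ and a standard diagonal extraction) reduces the construction to smooth $(w,\vartheta)$ for which the optimal $\gamma^\ast(x_1)$ in $\overline Q(w''(x_1),\vartheta'(x_1))$ can also be chosen smooth. For such data the target scaled Hessian is
\[
M_\ast(x_1):=\begin{pmatrix} w''(x_1) & \vartheta'(x_1) \\ \vartheta'(x_1) & \gamma^\ast(x_1) \end{pmatrix},
\]
which in general has $\det M_\ast\ne 0$ even though $\det\nabla^2_\e w_\e$ is forced to vanish pointwise. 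The discrepancy is resolved by lamination in $x_2$ at scale $O(\e)$: writing $M_\ast=\lambda A+(1-\lambda)B$ with symmetric rank-one matrices $A,B$ (so $\det A=\det B=0$), I construct $w_\e$ whose scaled Hessian oscillates between neighborhoods of $A$ and $B$ on thin $x_2$-bands. A convenient realization is $w_\e(x_1,x_2)=W(x_1,\e x_2)$, where $W$ is piecewise developable, so that the constraint $\det\nabla^2_\e w_\e=0$ is automatic; smoothing across band interfaces preserves $W^{2,2}$ regularity, and subtracting an affine function restores the zero-average side conditions. For the optimal choice of $(A,B,\lambda)$ dictated by the minimization defining $\overline Q$, the limit energy becomes
\[
\frac{1}{24}\int_I\bigl[Q_2(M_\ast)+\alpha^+(\det M_\ast)^+ + \alpha^-(\det M_\ast)^-\bigr]\,dx_1=\frac{1}{24}\int_I\overline Q(w'',\vartheta')\,dx_1,
\]
which is the desired limsup bound.

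The main obstacle is this geometric construction in (ii): selecting an optimal lamination and patching the developable pieces so as to enforce $\det\nabla^2_\e w_\e=0$ pointwise while simultaneously matching the weak limits of $w_\e$, $\nabla_\e w_\e$ and $\nabla^2_\e w_\e$ and producing the exact excess $\alpha^+(\det M_\ast)^+ + \alpha^-(\det M_\ast)^-$ in the energy. Since $\overline Q$ coincides with the density derived via analogous techniques for the Kirchhoff-plate based Sadowsky ribbon in \cite{FrHoMoPa,FrHoMoPa2}, I expect the lamination construction developed there to adapt with only minor modifications to the scaled operators $\nabla_\e,\nabla^2_\e$ of the present narrow-strip von K\'arm\'an setting.
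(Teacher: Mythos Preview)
Your argument for part (i) is correct and is essentially the same idea as the paper's: the paper invokes Theorem~\ref{lsh} (the relaxation result from \cite{FrHoMoPa2}), whose proof is exactly the null-Lagrangian trick you spell out. Your extra Jensen step in $x_2$ is harmless but unnecessary, since the pointwise inequality $Q_2(M_\gamma)+\alpha^+(\det M_\gamma)^++\alpha^-(\det M_\gamma)^-\ge \overline Q(w'',\vartheta')$ already yields the conclusion after integrating over~$S$.

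Part (ii), however, has a genuine gap. Lamination in the $x_2$-direction cannot produce the required relaxation. The structure of the scaled Hessian forces its first row to be essentially $x_2$-independent: if $\|\nabla^2_\e w_\e\|_{L^2}$ is bounded then $\partial_2(\partial_1 w_\e)=\e(\nabla^2_\e w_\e)_{12}=O(\e)$ and $\partial_2(\e^{-1}\partial_2 w_\e)=\e(\nabla^2_\e w_\e)_{22}=O(\e)$ in $L^2$, so $\partial_1 w_\e$ and $\e^{-1}\partial_2 w_\e$ (and hence their $x_1$-derivatives $(\nabla^2_\e w_\e)_{11}$ and $(\nabla^2_\e w_\e)_{12}$) can vary only by $O(\e)$ across the $x_2$-interval. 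Thus any two rank-one ``phases'' $A,B$ in an $x_2$-lamination must satisfy $A_{11}=B_{11}=w''(x_1)$ and $A_{12}=B_{12}=\vartheta'(x_1)$ in the limit; the constraint $\det A=\det B=0$ then forces $A_{22}=B_{22}$ when $w''\neq 0$, and is outright impossible when $w''=0\neq\vartheta'$. But these are precisely the regimes (e.g.\ $|w''|\le|\vartheta'|$ in the isotropic case) where the relaxation $\overline Q<Q_1$ is active and a nontrivial decomposition is needed. Only the $(2,2)$-entry can oscillate in $x_2$, and that is not enough.

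The paper's construction is different: the lamination takes place in $x_1$, at the level of matrix fields on $I$. One first applies Theorem~\ref{lsh} on $\mathcal B=I$ to obtain smooth $M^j\in C^\infty(\bar I;\R^{2\times 2}_{\sym})$ with $\det M^j=0$, $M^j\weak M$ in $L^2(I)$ and $\int_I Q_2(M^j)\to\int_I \overline Q(w'',\vartheta')$. Then, for each fixed $j$, one builds a single smooth developable function $z^j$ on $S_\e$ whose Hessian on the center line equals $M^j$; this is done via the ruling direction $\tilde b^j(x_1)\in\ker M^j(x_1)$ and the explicit formula $z^j(\Phi^j(\xi))=w^j(\xi_1)+\xi_2\,\tilde b^j(\xi_1)\cdot((w^j)',\vartheta^j)$, where $\Phi^j(\xi)=\xi_1 e_1+\xi_2\tilde b^j(\xi_1)$. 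Setting $w^j_\e(x_1,x_2)=z^j(x_1,\e x_2)$ (up to subtracting an affine correction) gives $\nabla^2_\e w^j_\e\to M^j$ \emph{strongly} as $\e\to 0$, and a diagonal argument finishes. The constructions in \cite{FrHoMoPa,FrHoMoPa2} that you cite are of this same type (same authors, same technique); they are not $x_2$-laminations, so the adaptation you anticipate does not go through as stated.
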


\begin{remark}\label{remark}
The quadratic energy density $Q_2$ can be computed from the non-linear energy density $W$ of the material, also mentioned in the introduction,  by first computing the quadratic energy density $Q_3$, see \cite{FrJaMu},
$$
Q_3(F):=\frac{\partial^2W}{\partial F^2}(I)(F,F)=\sum_{i,j,k,l=1}^3
\frac{\partial^2W}{\partial F_{ij}\partial F_{kl}}(I)F_{ij}F_{kl} ,\qquad F\in \R^{3\times3},
$$
and then by minimizing over the third column and row:
$$
Q_2(A):= \min\{Q_3(F): F_{\alpha\beta}=A_{\alpha\beta} \quad \alpha,\beta=1,2 \}, \qquad A\in \R^{2\times 2}_{\sym}.
$$

If the energy density $W$ is isotropic, the quadratic energy density $Q_3$ has the following representation:
$$
Q_3(F)=2\mu|F_{\rm sym}|^2+\lambda (F_{\rm sym}\cdot I)^2,\qquad F_{\rm sym}:=\frac{F+F^T}{2}\in \R^{3\times3},
$$
where $\mu$ and $\lambda$ are the so-called Lam\'e coefficients. A simple computation then leads to
$$
Q_2(A)=2\mu|A|^2+\frac{2\mu\lambda}{2\mu+\lambda} (A\cdot I)^2,\qquad A\in \R^{2\times 2}_{\sym}.
$$
The energy densities $Q_1, Q_0$, and $\overline Q$, may be found to have the following representation
$$
Q_1(\kappa,\tau)=E_{\rm Y}\kappa^2+4\mu \tau^2,
$$
where $E_{\rm Y}:=\mu\frac{2\mu+3\lambda}{\mu+\lambda}$ is the Young modulus of the material,
$$
Q_0(\kappa)=E_{\rm Y}\kappa^2,
$$
and
$$
\frac 1{12}\overline Q(\kappa,\tau) = \begin{cases}
\mathcal{D}\dfrac{(\kappa^2+\tau^2)^2}{\kappa^2} & \text{ if } |\kappa|>|\tau|,
\smallskip\\
4\mathcal{D}\tau^2 & \text{ if } |\kappa|\leq|\tau|,
\end{cases}
$$
where $\mathcal{D}:=\frac{\mu(\lambda+\mu)}{3(2\mu+\lambda)}$ is the bending stiffness.

\end{remark}

\section{Proofs}\label{proofs}

This section is devoted to prove the theorems stated in the previous section. 
For a given function $u\in L^1(S)$, we shall
denote by $\langle u\rangle$ the integral mean value of $u$ on $S$, that is,
$$
\langle u\rangle:=\frac{1}{\ell}\int_S u(x)\,dx.
$$
We use the same notation to denote the average over $I$ of functions defined on $I$.

\begin{proof}[Proof of~Lemma~\ref{VKcompactness}]
Let $(w_{\e})\subset  W^{2,2}_{\langle 0\rangle}(S)$ be a sequence of vertical displacements of $S$ satisfying \eqref{VKcp-1}.
This bound and the fact that $Q_2$ is positive definite imply that
\begin{equation}\label{bound12}
\|\d_{11}^2 w_{\e}\|_{L^2(S)} + \|\e^{-1}\d^2_{12} w_{\e}\|_{L^2(S)} + \|\e^{-2}\d^2_{22} w_{\e}\|_{L^2(S)} \leq C
\end{equation}
for any $\e$. Since
$$
\int_S w_\e(x)\, dx=0, \qquad \int_S \nabla w_\e(x)\, dx=0
$$
for every $\e>0$, by Poincar\'e-Wirtinger inequality the sequence $(w_\e)$ is uniformly bounded in $W^{2,2}(S)$. Therefore, there exists $w\in W^{2,2}_{\langle 0\rangle}(S)$ such that
 $w^\e\weak w$ weakly in $W^{2,2}(S)$,  up to a subsequence. 

By the previous bound, $\nabla (\e^{-1}\d_2 w_\e)$ is a bounded sequence in $L^2(S; \R^2)$ and, by Poincar\'e-Wirtinger inequality, also $(\e^{-1}\d_2 w_\e)$ is bounded in $L^2(S)$. 
It follows that $w$ is independent of $x_2$ and there exixts $\vartheta\in W^{1,2}_{\langle 0\rangle}(S)$ such that $\e^{-1}\d_2 w_\e\weak \vartheta$
weakly in $W^{1,2}(S)$, up to a subsequence. Moreover, also $\vartheta$ is independent of $x_2$.

By \eqref{bound12}, up to subsequences, we have that $\nabla^2_\e w_\e$ converges to a matrix field $A$ weakly in $L^2(S;\R^{2\times2}_{\sym})$. By using the convergences established above,  it follows that
$A_{11}=w''$ and
$A_{12} =\vartheta'$. The entry $A_{22}$, that cannot be identified in terms of
$w$ and $\vartheta$, is denoted by $\gamma$ in the statement. This proves \eqref{VKconv_second}.

We now prove the second part of the statement. The bound~\eqref{VKcp-2} implies that
\begin{equation}\label{2b}
\Big\|E^\e y_\e +\frac12\nabla_\e w_\e\otimes\nabla_\e w_\e\Big\|_{L^2}\le C
  \end{equation}
for any $\e$. Since $(\nabla^2_\e w_\e)$ is bounded in $L^2$, we have
\begin{align*}
\|\nabla_\e w_\e\otimes\nabla_\e w_\e\|_{L^2}&\le C\| |\nabla_\e w_\e|^2\|_{L^2}=C\| \nabla_\e w_\e\|^2_{L^4}\le C(\|\partial_1 w_\e\|_{L^4}^2+\|\e^{-1}\d_2 w_\e\|_{L^4}^2)\\ &
\le
C(\|\nabla\partial_1 w_\e\|_{L^2}^2+\|\nabla\e^{-1}\d_2 w_\e\|_{L^2}^2)
\le C \| \nabla^2_\e w_\e\|^2_{L^2}\le C
\end{align*}
for any $\e$, and the third to last inequality follows by the imbedding $W^{1,2}(S)\subset L^q$ $\forall\,q\in[2,+\infty)$ and Poincar\'e-Wirtinger inequality. Together with~\eqref{2b}, this implies that the sequence $(E^\e y_\e)$ is bounded in $L^2$.

By  the definition of $E^\e$ and Korn-Poincar\'e inequality we have that
\begin{equation}\label{boundEye}
\|y_\e\|_{W^{1,2}}\le C\| Ey_\e\|_{L^2}\le C\| E^\e y_\e\|_{L^2}\le C.
\end{equation}
Hence, up to subsequences, there exist $E\in L^2(S;\R^{2\times 2}_{\rm sym})$ and $y\in W^{1,2}_{\langle 0\rangle}(S;\R^2)$
such that
\begin{eqnarray*}
E^\e y_\e\weak E&\mbox{ in }L^2(S;\R^{2\times 2}_{\rm sym}),\\
y_\e\weak y&\mbox{ in }W^{1,2}(S;\R^2).
\end{eqnarray*}
By the definition of $E^\e$ and \eqref{boundEye} we have that
$$
(Ey_\e)_{12}\weak 0=(Ey)_{12},\quad (Ey_\e)_{22}\weak 0=(Ey)_{22};
$$
hence, $y\in BN_{\langle 0\rangle}(S;\R^2)$. 
Finally, the observation that $(E^\e y_\e)_{11}=\partial_1 (y_\e)_1\weak \partial_1y_1$ in $L^2(S)$ concludes the proof.
\end{proof}

\begin{proof}[Proof of Theorem~\ref{VKlinGamma}--(i)]
Let $(w_{\e})\subset W^{2,2}_{\langle 0\rangle}(S)$ be such that $w_{\e}\weak w$ in $W^{2,2}(S)$, and $\D_{\e} w_{\e} \weak (w', \vartheta)$ in $W^{1,2}(S; \R^2)$,
for some $w\in W^{2,2}_{\langle 0\rangle}(I)$ and $\vartheta\in W^{1,2}_{\langle 0\rangle}(I)$. Without loss of generality, we can assume that $\liminf_{\e\to 0} J_{\e}^{ben}(w_{\e})<+\infty$.
By Lemma~\ref{VKcompactness} we infer that, up to subsequences,
$$
\nabla^2_\e w_\e \weak
\begin{pmatrix}
w'' & \vartheta'
\\
\vartheta' & \gamma
\end{pmatrix}=:M_\gamma
\mbox{ in }L^{2}(S; \R^{2\times 2}_{\sym})
$$
for some $\gamma\in L^2(S)$.
By weak lower semicontinuity  and the definition of $Q_1$ we have
\begin{align*}
\liminf_{\e\to 0} J_\e^{ben}(w_{\e}) &= \liminf_{\e\to 0}\frac{1}{24}\int_S Q_2(\nabla^2_{\e} w_\e)\,dx\\ 
&\geq
\frac{1}{24}\int_S Q_2(M_\gamma)\,dx\ge \frac{1}{24}\int_I Q_1(w'',\theta')\,dx_1=J^{LvK}(w,\theta).
\end{align*}
\end{proof}

\begin{proof}[Proof of Theorem~\ref{VKlinGamma}--(ii)]
Let $w\in W^{2,2}_{\langle 0\rangle}(I)$ and $\vartheta\in W^{1,2}_{\langle 0\rangle}(I)$. We set
$$
M_\gamma:= \left( \begin{array}{cc}
w'' & \vartheta' \\
\vartheta' & \gamma
\end{array}\right),
$$
where $\gamma\in L^2(I)$ is such that
$$
Q_1(w'', \vartheta')= Q_2(M_\gamma).
$$
The fact that $\gamma$ belongs to $L^2(I)$ follows immediately by choosing
$M_0=w'' e_1\otimes e_1+\vartheta'(e_1\otimes e_2+e_2\otimes e_1)$ as a competitor in the definition of $Q_1$
and by using the positive definiteness of $Q_2$.

Let $\theta_\e\in C^\infty({\overline I})$ be such that $\int_I\theta_\e(x_1)\,dx_1=0$, $\theta_\e\to\theta$ in $W^{1,2}(I)$, and $\e\theta_\e''\to0$ in $L^2(I)$. Let $\gamma_\e\in C^\infty({\overline I})$ be such that  $\gamma_\e\to\gamma$,  $\e\gamma_\e'\to0$ and $\e^2\gamma_\e''\to0$ in $L^2(I)$. Let
\begin{equation}\label{werec}
w_\e(x)=w(x_1)+\e x_2\theta_\e(x_1)+\frac{\e^2}{2}\big(x_2^2\gamma_\e(x_1)-\langle x_2^2 \gamma_\e\rangle-x_1\langle x_2^2\gamma_\e'\rangle\big).
\end{equation}
It turns out that $w_\e\in W^{2,2}_{\langle 0\rangle}(S)$ and, by the convergences  above, we have $w_\e\to w$ in $W^{2,2}(S)$, $\D_{\e} w_{\e} \to (w', \vartheta)$ in $W^{1,2}(S; \R^2)$, and $\nabla^2_\e w_\e\to M_\gamma$ in $L^2(S)$. Moreover, by strong continuity we have
$$
\lim_{\e\to0}J_\e^{ben}(w_\e)=\lim_{\e\to0}\frac{1}{24}\int_S Q_2(\nabla^2_\e w_\e)\,dx=\frac{1}{24}\int_I Q_1(w'',\theta')\,dx_1
=J^{LvK}(w,\theta).
$$
\end{proof}

\begin{proof}[Proof of Theorem~\ref{VKusGamma}--(i)]
Let $(y_\e)\subset W^{1,2}_{\langle 0\rangle}(S;\R^2)$, $(w_{\e})\subset W^{2,2}_{\langle 0\rangle}(S)$ be such that $y_{\e}\weak y$ in $W^{1,2}(S;\R^2)$, $w_{\e}\weak w$ in $W^{2,2}(S)$, and $\D_{\e} w_{\e} \weak (w', \vartheta)$ in $W^{1,2}(S; \R^2)$,
for some $y\in BN_{\langle 0\rangle}(S;\R^2)$, $w\in W^{2,2}_{\langle 0\rangle}(I)$ and $\vartheta\in W^{1,2}_{\langle 0\rangle}(I)$. As usual, we can assume that $\liminf_{\e\to 0} J_{\e}^{vK}(y_\e,w_{\e})<+\infty$ and
by Lemma~\ref{VKcompactness} we deduce that, up to subsequences,
$$
E^\e y_\e\weak E\ \mbox{ and }\ \nabla^2_\e w_\e \weak
\begin{pmatrix}
w'' & \vartheta'
\\
\vartheta' & \gamma
\end{pmatrix}=:M_\gamma
\mbox{ in }L^{2}(S; \R^{2\times 2}_{\sym})
$$
for some $E\in L^2(S;\R^{2\times 2}_{\rm sym})$ with $E_{11}=\partial_1y_1$, and $\gamma\in L^2(S)$. Moreover, by the convergences above,
$$
E^\e y_\e+\frac12\nabla_\e w_\e\otimes\nabla_\e w_\e\weak E+\frac12(w',\theta)\otimes(w',\theta)\mbox{ in }L^2(S;\R^{2\times 2}_{\rm sym}).
$$
Then, by lower semicontinuity, we have
\begin{eqnarray*}
\liminf_{\e\to 0} J_\e^{vK}(y_\e,w_\e)&\ge&\liminf_{\e\to 0}\frac12 J_\e(y_\e,w_\e)+\liminf_{\e\to 0}\frac{1}{24}J_\e^{\rm lin}(w_\e)\\
&=&\liminf_{\e\to 0}\frac12\int_{S} Q_2\Big(E^\e y_\e+\frac12\nabla_\e w_\e\otimes\nabla_\e w_\e\Big)\,dx+\liminf_{\e\to 0}\frac{1}{24}\int_S Q_2(\nabla^2_\e w_\e)\, dx\\
&\ge&\frac12\int_S Q_2\Big( E+\frac12(w',\theta)\otimes(w',\theta)\Big)\,dx+\frac{1}{24}\int_S Q_2(M_\gamma)\, dx\\
&\ge&\frac{1}{2} \int_S Q_0\Big(\partial_1y_1+\frac{1}{2}|w'|^2\Big)\,dx+\frac{1}{24} \int_I Q_1(w'',\vartheta')\, dx_1\\
&=& J^{vK}(y,w,\vartheta).
\end{eqnarray*}
\end{proof}

\begin{proof}[Proof of Theorem~\ref{VKusGamma}--(ii)]
Let $y\in BN_{\langle 0\rangle}(S;\R^2)$, $w\in W^{2,2}_{\langle 0\rangle}(I)$, and $\vartheta\in W^{1,2}_{\langle 0\rangle}(I)$. As before, there exists
$\gamma\in L^2(I)$ such that the matrix
$$
M_\gamma:= \left( \begin{array}{cc}
w'' & \vartheta' \\
\vartheta' & \gamma
\end{array}\right)
$$
satisfies
$$
Q_1(w'', \vartheta')= Q_2(M_\gamma).
$$
There exist $\xi_1\in W^{1,2}_{\langle 0\rangle}(I)$ and $\xi_2\in W^{1,2}_{\langle 0\rangle}(I)\cap W^{2,2}(I)$ such that
$y_1(x)=\xi_1(x_1)-x_2\xi_2'(x_1)$ and  $y_2(x)=\xi_2(x_1)$.
Moreover, there exists $z\in L^2(S;\R^2)$ such that
the matrix
$$
M_z:= \left( \begin{array}{cc}
\partial_1y_1+\frac{1}{2}|w'|^2 & z_1 \\
z_1 & z_2
\end{array}\right)=
\left( \begin{array}{cc}
\xi_1'(x_1)-x_2\xi_2''(x_1)+\frac{1}{2}|w'(x_1)|^2 & z_1 \\
z_1 & z_2
\end{array}\right)
$$
satisfies
$$
 Q_0\Big(\partial_1 y_1+\frac{1}{2}|w'|^2\Big)=Q_2(M_z).
$$
It is easily seen that $z_1$ and $z_2$ depend linearly on $\partial_1 y_1+\frac{1}{2}|w'|^2$. Since 
$\partial_1 y_1(x_1,x_2)+\frac{1}{2}|w'(x_1)|^2=\xi_1'(x_1)-x_2\xi_2''(x_1)+\frac{1}{2}|w'(x_1)|^2$,
there exist $\zeta_\alpha\in L^2(I)$ and $\eta_\alpha\in L^2(I)$ such that  
$$
z_\alpha(x_1,x_2)=\zeta_\alpha(x_1)+x_2\eta_\alpha(x_1),\qquad\alpha=1,2. 
$$

 Let $w_\e$ be as  in the proof of Theorem~\ref{VKlinGamma}--(ii) (see \eqref{werec}), and let
$\zeta^\e_\alpha,\,\eta^\e_\alpha\in C^\infty({\overline I})$ be such that $\zeta^\e_\alpha\to \zeta_\alpha$ and $\eta^\e_\alpha\to \eta_\alpha$ in $L^{2}(I)$ and $\e {\zeta^\e_\alpha}^\prime\to0$ and $\e {\eta^\e_\alpha}^\prime\to0$ in $L^2(I)$.
Let us define
\begin{eqnarray*}
(y_\e)_1(x_1,x_2)&:=&\xi_1(x_1)-x_2\xi_2'(x_1)+\e\big(x_2^2\eta^\e_1(x_1)-\langle x_2^2\eta^\e_1\rangle\big), \\
(y_\e)_2(x_1,x_2)&:=&\xi_2(x_1)+\e\Big(\int_0^{x_1} \big(2\zeta^\e_1(s)-w'(s)\theta(s)\big)\,ds
-\langle\int_0^{x_1} \big(2\zeta^\e_1(s)-w'(s)\theta(s)\big)\,ds\rangle\Big)\\
 &&+\frac{\e^2}{2}{x_2}\big(2\zeta^\e_2(x_1)- \theta^2(x_1)\big)  +\frac{\e^2}{2}\big(x_2^2\eta^\e_2(x_1)-\langle x_2^2\eta^\e_2\rangle\big) .
\end{eqnarray*}
Then it is easy to check that
$$
E^\e y_\e+\frac12\nabla_\e w_\e\otimes\nabla_\e w_\e\to M_z\mbox{ in }L^2(S;\R^{2\times 2}_{\rm sym}).
$$
Thus, by strong continuity,
\begin{eqnarray*}
\lim_{\e\to 0} J_\e^{vK}(y_\e,w_\e)&=&\lim_{\e\to 0}\Big(\frac12 J_\e^{ext}(y_\e,w_\e)+\frac{1}{24}J_\e^{ben}(w_\e)\Big)\\
&=&\lim_{\e\to 0}\Big(\frac12\int_{S} Q_2\Big(E^\e y_\e+\frac12\nabla_\e w_\e\otimes\nabla_\e w_\e\Big)\,dx+\frac{1}{24}\int_S Q_2(\nabla^2_\e w_\e)\, dx\Big)\\
&=&\frac12\int_S Q_2( M_z)\,dx+\frac{1}{24}\int_S Q_2(M_\gamma)\, dx\\
&=&\frac{1}{2} \int_S Q_0\Big(\partial_1y_1+\frac{1}{2}|w'|^2\Big)\,dx+\frac{1}{24} \int_I Q_1(w'',\vartheta')\, dx_1\\
&=& J^{vK}(y,w,\vartheta).
\end{eqnarray*}
\end{proof}

The proof of the $\Gamma$-convergence theorem~\ref{VKGamma}
is based on a relaxation result for a quadratic integral functional with a constraint on the determinant, that has been proved in~\cite[Proposition~9]{FrHoMoPa2} and recalled here for reader's convenience.

Let $\mathcal B$ be a bounded open subset of $\R^n$.
Let $Q:\mathcal B\times \R^{2\times 2}_{\sym}\to[0,+\infty)$ be measurable in the first variable and quadratic in the second. Define the functional
$$
\FF : L^2\left(\mathcal B;\R^{2\times 2}_{\sym}\right)\to[0,+\infty]
$$
by
$$
\FF(M) :=
\begin{cases}
\displaystyle\int_{\mathcal B} Q(x,M(x))\, dx & \text{ if $\det M = 0$ a.e.\ in } \mathcal B,
\smallskip
\\
+ \infty & \text{ otherwise.}
\end{cases}
$$

\begin{theorem}[\cite{FrHoMoPa2}]\label{lsh}
The weak-$L^2$ lower semicontinuous envelope of $\FF$ is the functional
$$
\o\FF : L^2\left(\mathcal B;\R^{2\times 2}_{\sym}\right)\to[0,+\infty)
$$
given by
$$
\o\FF(M) = \int_{\mathcal B} \left( Q(x,M(x)) + \alpha^+(x) (\det M(x))^+ +\alpha^-(x) (\det M(x))^-  \right)\, dx,
$$
where for every $x\in \mathcal B$
$$
\alpha^+(x):=\sup\{ \alpha>0: \ Q(x,M)+\alpha\det M\geq0 \text{ for every } M\in \R^{2\times 2}_{\sym}\}
$$
and
$$
\alpha^-(x):=\sup\{ \alpha>0: \ Q(x,M)-\alpha\det M\geq0 \text{ for every } M\in \R^{2\times 2}_{\sym}\}.
$$
\end{theorem}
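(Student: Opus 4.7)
My plan is to reduce the result to a pointwise convex-envelope calculation on $\R^{2\times 2}_{\sym}$. Because the constraint $\det M = 0$ is pointwise and involves no differential operator on $M$, a classical integral-representation theorem (in the vein of Ioffe) identifies the weak-$L^2$ lower semicontinuous envelope of $\FF$ with the integral functional whose integrand is, at almost every $x$, the convex envelope of the pointwise function $f(x,M) := Q(x,M)$ if $\det M = 0$ and $+\infty$ otherwise. The theorem therefore reduces to the pointwise identity $f(x,\cdot)^{**} = \psi(x,\cdot)$, where $\psi(x,M) := Q(x,M) + \alpha^+(x)(\det M)^+ + \alpha^-(x)(\det M)^-$. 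The easier direction $\psi \leq f^{**}$ rests on the observation
$$
\psi(x,M) = \max\bigl(Q(x,M)+\alpha^+(x)\det M,\; Q(x,M)-\alpha^-(x)\det M\bigr),
$$
which is valid because $\alpha^\pm\geq 0$; by the very definition of $\alpha^\pm(x)$ each summand is a non-negative quadratic form in $M$, in particular convex, so their maximum is convex. Since $\psi \leq f$ (they coincide on $\{\det M = 0\}$ and $f=+\infty$ elsewhere), $\psi(x,\cdot)$ is a convex minorant of $f(x,\cdot)$, hence $\psi \leq f^{**}$.

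The harder direction $f^{**}(x,\cdot)\leq\psi(x,\cdot)$ calls for an explicit two-phase laminate. Fix $x$ and write $\phi := Q(x,\cdot)+\alpha^+(x)\det$. By the sup characterization of $\alpha^+(x)$, the non-negative quadratic form $\phi$ is degenerate and admits a non-zero $M^*$ in its kernel. Because $\phi\geq 0$ and $\phi(M^*)=0$, the associated symmetric bilinear form satisfies $\phi(M^*,\cdot)\equiv 0$, so $\phi$ is constant along every line parallel to $M^*$. Moreover, if $Q(x,\cdot)$ is positive definite (the case of interest here), then $Q(x,M^*)>0$, so $\det M^* = -Q(x,M^*)/\alpha^+(x) < 0$. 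For $M$ with $\det M > 0$, the polynomial $t\mapsto \det(M+tM^*)$ is quadratic in $t$ with negative leading coefficient $\det M^*$ and positive constant term $\det M$, and therefore has real roots $t_- < 0 < t_+$. Set $A := M + t_+ M^*$, $B := M + t_- M^*$, and $\lambda := -t_-/(t_+ - t_-)\in(0,1)$; then $\det A = \det B = 0$ and $\lambda A + (1-\lambda) B = M$. Since $\phi$ is constant along the $M^*$-direction and $\det A = \det B = 0$,
$$
\lambda Q(x,A) + (1-\lambda) Q(x,B) = \lambda\phi(A) + (1-\lambda)\phi(B) = \phi(M) = \psi(x,M).
$$
This shows $f(x,\cdot)^{**}(M)\leq\psi(x,M)$ when $\det M > 0$; a symmetric argument with the kernel of $Q(x,\cdot)-\alpha^-(x)\det$ handles $\det M < 0$, while $\det M = 0$ is trivial.

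To pass from the pointwise identity to the global statement I appeal to the standard relaxation machinery: approximate $M\in L^2$ strongly by piecewise constant matrix fields, on each constant piece realize the pointwise two-phase decomposition above as a fine $L^2$-oscillation (whose weak limit is the given constant value), and conclude by a diagonal argument using the strong-$L^2$ continuity of $\o\FF$---which holds because $\det:L^2\to L^1$ is continuous. Measurability of $\alpha^\pm(x)$ as functions on $\mathcal B$ can be arranged by writing the defining suprema as countable ones over a dense family of matrices. The main technical obstacle lies in the upper-bound construction: one must rigorously verify the transversality claim that $\det M^*<0$ and that the quadratic $t\mapsto\det(M+tM^*)$ has two distinct real roots bracketing $t=0$, and separately treat the degenerate cases $\alpha^\pm(x)\in\{0,+\infty\}$ that do not arise when $Q(x,\cdot)$ is uniformly positive definite.
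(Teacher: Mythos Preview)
The paper does not prove this theorem: it is quoted verbatim from \cite[Proposition~9]{FrHoMoPa2} and then invoked as a black box in the proof of Theorem~\ref{VKGamma}. There is therefore no in-paper argument to compare yours against.

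For what it is worth, your outline is a correct proof in the positive-definite case, which is the only one needed here (the paper applies the result with $Q(x,M)=Q_2(M)$). The identity $\psi=\max\bigl(Q+\alpha^+\det,\;Q-\alpha^-\det\bigr)$, together with the observation that each branch is a non-negative quadratic form on $\R^{2\times 2}_{\sym}$, cleanly yields $\psi\leq f^{**}$. For the reverse inequality, your two-phase laminate along the kernel direction $M^*$ of the degenerate form $Q+\alpha^+\det$ is precisely the right construction: positive definiteness of $Q$ forces $\det M^*<0$, so for $\det M>0$ the quadratic $t\mapsto\det(M+tM^*)$ has negative leading coefficient and positive constant term, hence two real roots of opposite sign; the invariance $\phi(M+tM^*)=\phi(M)$ then gives the exact energy match. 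Since the constraint $\det M=0$ is pointwise and involves no derivatives of $M$, the passage from the pointwise biconjugate to the weak-$L^2$ relaxation is indeed standard integral-representation theory, and your remark on the strong-$L^2$ continuity of $\o\FF$ (via continuity of $\det:L^2\to L^1$) suffices for the diagonal argument in the upper bound. The degenerate regimes $\alpha^\pm\in\{0,+\infty\}$ that you flag do not occur when $Q(x,\cdot)$ is uniformly positive definite, so they are irrelevant for the application in this paper.
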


\begin{proof}[Proof of Theorem~\ref{VKGamma}--(i)]
Let $(w_{\e})$ be such that $w_\e\in \We$,
$w_{\e}\weak w$ in $W^{2,2}(S)$, and $\D_{\e} w_{\e} \weak (w', \vartheta)$ in $W^{1,2}(S; \R^2)$,
for some $w\in W^{2,2}_{\langle 0\rangle}(I)$ and $\vartheta\in W^{1,2}_{\langle 0\rangle}(I)$. Under the assumption that $\liminf_{\e\to 0} J_{\e}^{vK}(w_{\e})<+\infty$,
by Lemma~\ref{VKcompactness} we deduce that, up to subsequences,
$$
\nabla^2_\e w_\e \weak
\begin{pmatrix}
w'' & \vartheta'
\\
\vartheta' & \gamma
\end{pmatrix}
\mbox{ in }L^{2}(S; \R^{2\times 2}_{\sym})
$$
for some $\gamma\in L^2(S)$.
Since $\det \nabla^2_{\e} w_\e = 0$, an application of Theorem~\ref{lsh} with $Q(x,M):=Q_2(M)$ and ${\mathcal B}=S$ shows that
$$
\liminf_{\e\to 0} J_{\e}^{CvK}(w_{\e}) = \liminf_{\e\to 0} \frac{1}{24} \F(\nabla^2_{\e} w_\e) \geq \frac{1}{24}\o\F\begin{pmatrix}
w'' & \vartheta'
\\
\vartheta' & \gamma
\end{pmatrix}
 \geq \frac{1}{24}\int_I \overline Q(w'', \vartheta')\, dx_1.
$$
Therefore, we conclude that
$$
\liminf_{\e\to 0} J_\e^{CvK}(w_\e)\ge J(w,\vartheta).
$$
\end{proof}

\begin{proof}[Proof of Theorem~\ref{VKGamma}--(ii)]
Let $w\in W^{2,2}_{\langle 0\rangle}(I)$ and $\vartheta\in W^{1,2}_{\langle 0\rangle}(I)$.
We set
$$
M:= \left( \begin{array}{cc}
w'' & \vartheta' \\
\vartheta' & \gamma
\end{array}\right),
$$
where $\gamma\in L^2(I)$ is such that
$$
\overline Q(w'', \vartheta')= Q_2(M) +\alpha^+(\det M)^++\alpha^-(\det M)^-.
$$
As before, the fact that $\gamma$ belongs to $L^2(I)$ follows immediately by choosing
$M_0=w'' e_1\otimes e_1+\vartheta'(e_1\otimes e_2+e_2\otimes e_1)$ as a competitor in the definition of $\o Q$
and by using the positive definiteness of $Q_2$.

By Theorem~\ref{lsh} with $Q(x,M):=Q_2(M)$ and ${\mathcal B}=I$, there exist $M^j\in L^2(I; \R^{2\times 2}_{\sym})$
with $\det M^j = 0$ and such that
$M^j\weak M$ weakly in $L^2(I;\R^{2\times2}_{\sym})$ and $\F(M^j)\to \o\F(M)$, as $j\to\infty$.
We can also assume that $M^j\in C^\infty(\bar{I};\R^{2\times 2}_{\sym})$. The proof of this fact
relies on a construction described in \cite[Theorem~2.2--(ii)]{FrHoMoPa}. We give here full details for convenience of the reader. Suppose that $(M_n)$ be a sequence of matrices with the same properties of $(M^j)$ apart from the regularity, and denote by $\lambda_n \in L^2(I)$ the trace of $M_n$. Since $M_n$ is symmetric
with $\det M_n = 0$, there exists $\beta_n=\beta_n(x_1)\in (-\pi/2,\pi/2]$ such that
$$
M_n=
\left(
\begin{array}{cc}
\cos \beta_n & -\sin\beta_n\\
\sin \beta_n & \cos\beta_n
\end{array}
\right)
\left(
\begin{array}{cc}
\lambda_n & 0\\
0 & 0
\end{array}
\right)
\left(
\begin{array}{cc}
\cos \beta_n & \sin\beta_n\\
-\sin \beta_n & \cos\beta_n
\end{array}
\right),
$$
and $\beta_n$ is uniquely determined if $\lambda_n\neq 0$.
When $\lambda_n(x_1) = 0$, we set $\beta_n(x_1)=0$.
We may
assume without loss of generality that $\lambda_n\in L^{\infty}(I)$, possibly after truncating $\lambda_n$ in modulus by $n$,
while $M_n$ still enjoys the same properties as before.
We can find
$\lambda_{n,k}\in C^\infty(\bar I)$ and
$\beta_{n,k}\in C^\infty(\bar I;(-\pi/2,\pi/2))$ such that, as $k\to\infty$,
$\lambda_{n,k}\to\lambda_n$ and $\beta_{n,k}\to\beta_n$ in $L^p(I)$ for every $p<+\infty$.
Set
$$
M_{n,k}:=
\left(
\begin{array}{cc}
\cos \beta_{n,k} & -\sin\beta_{n,k}\\
\sin \beta_{n,k} & \cos\beta_{n,k}
\end{array}
\right)
\left(
\begin{array}{cc}
\lambda_{n,k} & 0\\
0 & 0
\end{array}
\right)
\left(
\begin{array}{cc}
\cos \beta_{n,k} & \sin\beta_{n,k}\\
-\sin \beta_{n,k} & \cos\beta_{n,k}
\end{array}
\right).
$$
Then, $\det M_{n,k}=0$ for every $n,k$ and $M_{n,k}\to M_n$ in $L^2(I;\R^{2\times2}_{\sym})$, as $k\to\infty$.

Thus, by a diagonal argument, we may assume that there exist $\lambda^j\in C^\infty(\bar I)$
and $\beta^j\in C^\infty(\bar I)$ such that $|\beta^j| < \pi/2$
on $\bar I$, and with
\begin{eqnarray*}
M^j
&:=&
\left(
\begin{array}{cc}
\cos \beta^j & -\sin\beta^j\\
\sin \beta^j & \cos\beta^j
\end{array}
\right)
\left(
\begin{array}{cc}
\lambda^j & 0\\
0 & 0
\end{array}
\right)
\left(
\begin{array}{cc}
\cos \beta^j & \sin\beta^j\\
-\sin \beta^j & \cos\beta^j
\end{array}
\right)\\[2pt]
&=&
\lambda^j\left(
\begin{array}{cc}
\cos^2 \beta^j & \sin\beta^j \cos\beta^j\\
\sin \beta^j \cos\beta^j & \sin^2\beta^j
\end{array}
\right)
\end{eqnarray*}
we have that $M^j\in C^\infty(\bar{I};\R^{2\times 2}_{\sym})$, $\det M^j=0$ for every $j$, $M^j\weak M$ in $L^2(I;\R^{2\times2}_{\sym})$, and
$\F(M_j)\to \overline\F(M)$, as $j\to\infty$.

For all $j = 1, 2, \dots$ and all $k$, $l\in \{1, 2\}$ we
define $\o M^j_{kl}(x_1) := \int_0^{x_1} M^j_{kl}(s)\, ds$,
$$
w^j(x_1):=\int_0^{x_1}(x_1-s)M^j_{11}(s)\, ds -
\frac{1}{\ell}\int_I \Big( \int_0^t (t - s) M^j_{11}(s)\, ds \Big)\, dt
-x_1\langle \o M^j_{11}\rangle,
$$
and
$$
\vartheta^j(x_1):=\o M^j_{12}(x_1) -
\langle \o M^j_{12} \rangle.
$$
It is clear that $w^j\weak w$ weakly in $W^{2,2}(I)$ and $\vartheta^j\weak \vartheta$ weakly in $W^{1,2}(I)$,
as $j\to\infty$. Moreover, $w^j\in W^{2,2}_{\langle 0\rangle}(I)$
and $\vartheta^j\in W^{1,2}_{\langle 0\rangle}(I)$.

After extending $\beta^j$ smoothly to all of $\R$, still satisfying $|\beta^j|<\pi/2$, we define
$\alpha^j := \frac \pi{2}+\beta^j$,
$$
\t b^j(\xi_1) := \cos \alpha^j(\xi_1) e_1 + \sin \alpha^j(\xi_1) e_2
\ \mbox{ and }\
\Phi^j(\xi_1,\xi_2) := \xi_1 e_1 + \xi_2 \t b^j(\xi_1).
$$
Observe that, by the definition of $\t b^j$ and since
$(w^j)'' = M_{11}^j$ and $(\theta^j)' = M_{12}^j$,
\begin{equation}
\label{konstr-1}
\begin{pmatrix}
(w^j)''
\\
(\theta^j)'
\end{pmatrix}\cdot \t b^j = 0.
\end{equation}
Arguing  as in \cite[Lemma~12]{FrHoMoPa2}, we
see that for every $\e\le\e^j$ the matrix $\D\Phi^j(\xi_1, \xi_2)$
is invertible for $|\xi_2|\leq\e$,
and the map $(\Phi^j)^{-1} : S_\e\to \R^2$ is well defined.
For such $\e$ define $z^j : S_{\e}\to\R$ by setting
\begin{equation}
\label{defvonz}
z^j\left( \Phi^j(\xi_1, \xi_2) \right)
=
w^j(\xi_1) + \xi_2 \t b^j(\xi_1)\cdot
\begin{pmatrix}
(w^j)'(\xi_1)
\\
\theta^j(\xi_1)
\end{pmatrix}.
\end{equation}
We clearly have
\begin{equation}
\label{VKz}
z^j(\cdot, 0) = w^j.
\end{equation}
Moreover, taking derivatives in \eqref{defvonz}
and using \eqref{konstr-1}, we obtain
\begin{align*}
\D z^j(\Phi^j)^T\D\Phi^j &= 
\left( (w^j)'(\xi_1) + \xi_2 (\t b^j)'(\xi_1)\cdot
\begin{pmatrix}
(w^j)'(\xi_1)
\\
\theta^j(\xi_1)
\end{pmatrix},\
\t b^j(\xi_1)\cdot
\begin{pmatrix}
(w^j)'(\xi_1)
\\
\theta^j(\xi_1)
\end{pmatrix}
\right)
\\
&= \begin{pmatrix}
(w^j)'(\xi_1)
\\
\theta^j(\xi_1)
\end{pmatrix}^T\D\Phi^j.
\end{align*}
Since $\D\Phi^j$ is invertible for small $|\xi_2|$, we conclude that
for small $|\xi_2|$ and all $\xi_1$
\begin{equation}
\label{konstruktion-2}
\D z^j\left( \Phi^j(\xi_1, \xi_2) \right) = 
\begin{pmatrix}
(w^j)'(\xi_1)
\\
\theta^j(\xi_1)
\end{pmatrix}.
\end{equation}
Taking the derivative with respect to $\xi_2$, we conclude that
$$
\D^2 z^j(\Phi^j(\xi_1, \xi_2))\,\t b^j(\xi_1) = 0.
$$
In particular, the kernel is nontrivial, so
$
\det\D^2 z^j = 0\mbox{ on }S_{\e}.
$
Since $M^j\,\t b^j = 0$, in particular we have that $(\D^2 z^j(\cdot, 0) - M^j)\,\t b^j = 0$.
But from \eqref{VKz} we see that
$$
e_1\cdot \left( \D^2 z^j(\cdot, 0) - M^j \right)e_1 = \d^2_{11} z^j(\cdot, 0) - (w^j)'' = 0.
$$
Since $e_2\cdot\t b^j\neq 0$ and 
since $(\D^2 z^j(\cdot, 0) - M^j)$ is symmetric, we conclude that 
\begin{equation}
\label{VKch}
\D^2 z^j(\cdot, 0) = M^j.
\end{equation}

Finally, for $\e$ small enough we define
$\t w^j_{\e} : S\to\R$ by $\t w^j_{\e}(x_1, x_2) = z^j(x_1, \e x_2)$.
From \eqref{VKz} it follows immediately that $\t w^j_{\e}\to w^j$ strongly in $L^2(S)$, as $\e\to0$. Moreover, since
$$
\D_{\e} \t w^j_{\e}(x)=\nabla z^j(x_1,\e x_2),
$$
equation \eqref{konstruktion-2} implies that 
$\D_{\e} \t w^j_{\e} \to ((w^j)', \vartheta^j)$ strongly in $W^{1,2}(S; \R^2)$, as $\e\to0$.
In particular, denoting by $F_{\e}\in\R^2$ the average of $\D_{\e} \t w^j_{\e}$ over $S$,
we have
\begin{align*}
\lim_{\e\to 0} F_{\e} 
= \frac{1}{\ell}\int_I 
\zwo{(w^j)'}{\theta^j}(x_1)\, dx_1 = 0,
\end{align*}
by definition of $w^j$ and $\theta^j$.
Similarly, 
denoting by $c_{\e}$ the average of $\t w^j_{\e}$ over $S$, we have $c_{\e}\to 0$.
Hence the functions $w^j_{\e} : S\to\R$ defined by
$$
w^j_{\e}(x) := \t w^j_{\e}(x) - F_{\e}\cdot\zwo{x_1}{\e x_2} - c_{\e},
$$
still satisfy $\t w^j_{\e}\to w^j$ strongly in $L^2(S)$ and 
$\D_{\e} \t w^j_{\e} \to ((w^j)', \vartheta^j)$ strongly in $W^{1,2}(S; \R^2)$.
Moreover, $w^j_{\e}\in W^{2,2}_{\langle 0\rangle}(S)$ by definition
of $F_{\e}$ and $c_{\e}$.

Finally, since
$$
\nabla^2_\e w_\e^j(x) =
\nabla^2_\e \t w_\e^j(x)=
 \nabla^2 z^j(x_1,\e x_2), 
$$
we have that $w_\e^j\in\We$. By \eqref{VKch} we deduce that $\nabla^2_\e w_\e^j\to M^j$ strongly in $L^{2}(S; \R^{2\times 2}_{\sym})$, as $\e\to0$.
Hence,
$$
\lim_{\e\to 0} J_\e^{CvK}(w_\e^j)=\lim_{\e\to0}\frac{1}{24}\int_S Q_2(\nabla^2_\e w_\e^j(x))\, dx= \frac{1}{24}\int_S Q_2(M^j(x))\, dx
= \frac{1}{24}\F(M^j) .
$$
Therefore, by taking diagonal sequences we obtain the desired maps.
\end{proof}

\bigskip

\noindent
{\bf Acknowledgements.}
MGM acknowledges support by GNAMPA--INdAM under Project 2016 ``Multiscale analysis of complex systems with variational methods''
and by the ERC under Grant No.\ 290888 ``Quasistatic and Dynamic Evolution Problems in Plasticity and Fracture''. PH acknowledges support by the DFG.

\end{document}